\newtheorem{theorem}{Theorem}
\newtheorem{example}[theorem]{Example}
\newtheorem{lemma}[theorem]{Lemma}
\newtheorem{remark}[theorem]{Remark}
\begin{document}

\begin{frontmatter}



\title{ An explicit univariate and radical parametrization of the septic proper Zolotarev polynomials in power form}


\author[1]{Heinz-Joachim Rack}
\author[2]{Robert Vajda\corref{cor1} \fnref{fn1}}
\cortext[cor1]{Corresponding author}
\fntext[fn1]{Supported by the EU-funded Hungarian grant 
EFOP-3.6.1-16-2016-00008 and the grant by the Ministry of Human Capacities TUDFO/47138-1/2019-ITM.}

\address[1]{Dr. Rack Consulting GmbH, Steubenstrasse 26 a, 58097 Hagen, Germany}
\address[2]{Bolyai Institute, University of Szeged, Aradi Vertanuk tere 1, 6720 Szeged, Hungary}

\begin{abstract}
The problem of determining an explicit one-parameter power form representation of the proper $n$-th degree 
Zolotarev polynomials on $[-1,1]$ can be traced back to P. L. Chebyshev, see \cite{Zolotarev1877}.
It turned out to be complicated, even for 
small values of $n$. Such a representation was known to A. A. Markov (1889) \cite{AAMarkov1889} for $n=2$ and $n=3$, see also \cite{Carlson83}. \emph{But already
for $n=4$ it seems that nobody really believed that an explicit form can be found. 
As a matter of fact it was, by V. A. Markov in 1892} \cite{VAMarkov}, as A. Shadrin put it in 2004 \cite{Shadrin04}, see also \cite{Rack89}, \cite{Rack17a}. 
The next higher degrees, $n=5$ and $n=6$, were resolved only recently, by G. Grasegger and N. Th. Vo (2017) \cite{Grasegger17} respectively by the present authors (2019) \cite{RackVajda19}. 
In this paper we settle the case $n=7$ using symbolic computation.
The parametrization for the degrees $n\in \{2,3,4\}$ is a rational one, whereas for $n\in \{5,6,7\}$ it is a radical one.
However, the case $n=7$ among the radical parametrizations requires special attention, since it is not a simple radical one.
\end{abstract}

\begin{keyword}
Abel-Pell differential equation \sep
algebraic curve \sep
explicit power form representation \sep  
genus \sep
Peherstorfer-Schiefermayr system of nonlinear equations \sep
polynomial of degree seven \sep 
proper Zolotarev polynomial \sep
radical parametrization \sep
septic \sep
symbolic computation 

\MSC 41A10 \sep 41A29 \sep 41A50

\end{keyword}

\end{frontmatter}

\section{Introduction and historical remarks}\label{sectintro}

With $||.||_{\infty}$ denoting the uniform norm on $\boldsymbol{I}=[-1,1]\subset\mathbb{R}$, Chebyshev \cite{Chebyshev}
found that
\begin{equation}\label{genChebext} 
\min_{(a_{0,n},\cdots,a_{n-1,n)}\in \mathbb{R}^n}||\widetilde{P}_n||_{\infty}=2^{1-n},\hbox{ where } \widetilde{P}_{n}(x)=\sum_{k=0}^{n-1} a_{k,n}\, x^k+x^n.
\end{equation}
The least possible value $2^{1-n}$ is attained if $\widetilde{P}_n(x)={\widetilde{P}}^*_n(x)=2^{1-n}T_n(x)=\sum_{k=0}^{n-1} a_{k,n}^{*}\, x^k+x^n$
with known optimal coefficients $a_{k,n}^{*}$. 
Here $T_n$ with $||T_n||_{\infty}=1$ denotes the $n$-th 
(normalized) Chebyshev polynomial of the first kind with respect to $\boldsymbol{I}$, see \cite[p. 6, p. 67]{Rivlin74} or \cite[p. 384]{Milovanovic} 
for details.
In 1867 Chebyshev himself proposed to his student E. I. Zolotarev, see \cite[p. 2]{Zolotarev1868}, 
an extension of (\ref{genChebext}) by requiring that not only the first but also
the second leading coefficient, $a_{n-1,n}$, is to be kept fixed. This extension was later re-named 
as Zolotarev's first problem (ZFP) and amounts, for a given $n\ge2$, to the 
determination of 
\begin{align}\label{genZFP}
\min_{(a_{0,n},\cdots,a_{n-2,n)}\in \mathbb{R}^{n-1}}||\widetilde{Z}_{n,s}||_{\infty}=L_n(s),\, \hbox{where}\\ 
\widetilde{Z}_{n,s}(x)=\sum_{k=0}^{n-2} a_{k,n}\, x^k+(-ns )x^{n-1}+x^n, \nonumber
\end{align} 
and of the extremal polynomial, $\widetilde{Z}^{*}_{n,s}$, where $s\in \mathbb{R}$ is assumed.
Thus $a_{n,n}=1$ and the second leading coefficient, $a_{n-1,n}=(-ns)$, although thought of as being fixed, 
may attain arbitrary values, 
so that we save the notation $s_0$ for a concrete prescribed number $s$.
Correspondingly, we shall then write $L_{n}(s_0)$ for a concrete minimum in (\ref{genZFP}) and
$\widetilde{Z}^{*}_{n,s_0}$ for a concrete extremal (minimal) polynomial. 
It is well known that one may restrict the range of $s$ to $s>0$, and that, for $0< s\le\tan^2(\frac{\pi}{2n})$,
$\widetilde{Z}^{*}_{n,s}$ is given by a distorted $\widetilde{P}^{*}_n$, see e.g.
\cite[p. 16]{Achieser98}, \cite[p. 57]{Achieser03}, \cite{Carlson83}, \cite[p. 405]{Milovanovic} 
for details, and is called a \emph{monic improper Zolotarev polynomial}.

For the range $s>\tan^2(\frac{\pi}{2n})$ however, on which we focus in this paper, the solution $\widetilde{Z}^{*}_{n,s}$ of
ZFP is considered as \emph{very complicated}, see e.g. \cite[p. 27]{Achieser98}, \cite{Carlson83}, \cite[p. 407]{Milovanovic},
\cite{Peherstorfer91}, as \emph{unwieldy} \cite[p. 118]{Tikhomirov}, or even as \emph{mysterious} \cite{Todd84}, and is called a \emph{monic proper} \cite{Shadrin04}, 
\cite[p. 160]{Voronovskaja70} or \emph{hard-core} \cite[p. 407]{Milovanovic}, \cite{Rivlin75} \emph{Zolotarev polynomial}.
The min-max-problems in (\ref{genChebext}) and (\ref{genZFP}) can be viewed as problems of best uniform approximation to $x^n$ respectively 
$(-n s)x^{n-1}+x^n$ by polynomials of degree $n-1$ respectively $n-2$.

Zolotarev provided $\widetilde{Z}^{*}_{n,s}$ in 1868 \cite{Zolotarev1868} and in a reworked form in 1877 \cite{Zolotarev1877}, however not,
as is suggested by the task (\ref{genZFP}), in an algebraic power form with explicit optimal coefficients.
Rather, he presented $\widetilde{Z}^{*}_{n,s}$ in terms of elliptic integrals and functions, see also e.g. 
\cite[p. 18]{Achieser98}, \cite[p. 280]{Achieser03}, \cite{Carlson83}, \cite{Erdos42}, \cite{Lebedev94}, \cite[p. 407]{Milovanovic}, \cite{Peherstorfer04} for details.
A. A. Markov \cite[p. 264]{AAMarkov1906} expressed his reservation about Zolotarev's solution: 
\emph{Being based on the application of elliptic functions, Zolotarev's solution is too complicated to be useful in practice}. 
As it is expounded in \cite[Section 3]{Carlson83}, to deduce from Zolotarev's elliptic solution an algebraic power form solution 
(sometimes called \emph{synthesizing} \cite[p. 1066]{Chen}) turns out to be \emph{unexpectedly complicated}, even for the first reasonable polynomial degree $n=2$.
As F. Peherstorfer put it in 2006 \cite[p. 143]{Peherstorfer06}, \emph{there was and still is a demand for a description [of proper Zolotarev polynomials]
without elliptic functions}. 
In literature there are scattered several approaches to solve ZFP algebraically and thus to avoid the use of elliptic functions, see \cite[Section 1]{RackVajda19}.

But when it comes down to represent the monic proper Zolotarev polynomial $\widetilde{Z}^{*}_{n,s}$, or its normalized version
$Z^{*}_{n,s}=\widetilde{Z}^{*}_{n,s}/L_n(s)$ (with $||Z^{*}_{n,s}||_{\infty}=1$), 
as a polynomial of degree $n$ in power form with explicit parameterized coefficients, then such solutions are known only
for $2\le n\le6$, see Section \ref{sec26} below. 
Upon using symbolic computation as implemented in \emph{Maple} \cite{maple} and \emph{Mathematica} \cite{MMA1}, 
we are now able to provide such an explicit power form solution 
even for the degree $n=7$, see Section \ref{sec7} below. This contributes to the solution
of ZFP which is one of E. Kaltofen's \emph{favorite open problems} in symbolic computation \cite[Section 2]{Kaltofen}. 
In the conference paper \cite{Lazard} it is claimed to have solved ZFP by symbolic computation even for $6\le n\le12$,
but actually a theoretical solution strategy is delineated, without providing a solution formula or a concrete example, and in particular without representing the extremal polynomial in a parameterized power form for a given $n$.
For an algorithm-based algebraic solution formula to ZFP for $6\le n\le12$ see \cite{Rack18}.

\section{Explicit analytical one-parameter power form representation of the proper 
   Zolotarev polynomials of degree $2\le n\le6$}\label{sec26}
   
With the goal to find a convenient parametrization for the coefficients of the extremal polynomial in (\ref{genZFP}) 
with a parameter $t\in I_n$ from some \emph{finite} open parameter interval
$I_n\subset\mathbb{R}$ $(n>2)$, and following the literature, see \cite{Grasegger17}, \cite[Secton 14]{Paszkowski62},
\cite{Rack89}, \cite{RackVajda19}, \cite[Section 1.4]{Shadrin04}, 
  we now change our notation and strive to obtain the solution of ZFP in the form   
\begin{equation}\label{mzolpargen}
\widetilde{Z}_{n,t}(x)=\sum_{k=0}^{n-2} a_{k,n}(t)\, x^k+(-ns(t) )x^{n-1}+x^n,
\end{equation}
where the explicit coefficients $a_{k,n}(t)$ and $-n s(t)$ depend injectively on $t\in I_n$. The least
deviation (on $\boldsymbol{I}$) of $\widetilde{Z}_{n,t}$ from the zero-function is $||\widetilde{Z}_{n,t}||_{\infty}=L_n(t)$.
For a prescribed $t=t_0\in I_n$ there holds $\min_{(a_{0,n},\dots,a_{n-2,n})\in \mathbb{R}^{n-1}}||P_n||_{\infty}$ $=L_n(t_0)$, 
where $P_{n}(x)=\sum_{k=0}^{n-2} a_{k,n} x^k+(-ns(t_0))x^{n-1}+x^n$.
Thus, for a given fixed degree $n$, (\ref{mzolpargen}) represents an infinite family of $n$-th degree 
monic proper Zolotarev polynomials. For a prescribed $s=s_0\in (\tan^2\left(\frac{\pi}{2n}\right),\infty)$
one then has to solve the equation $s(t)=s_0$ for $t$ and to insert the unique solution
$t=t_0\in I_n$ into $\widetilde{Z}_{n,t}(x)$ in order to get the desired solution $\widetilde{Z}^{*}_{n,s_0}(x)=\widetilde{Z}_{n,t_0}(x)$ in (\ref{genZFP})
for the given $s=s_0$.

Analogously, we will denote by
\begin{equation}\label{nzolpargen}
Z_{n,t}(x)=\sum_{k=0}^{n} b_{k,n}(t) x^k\hbox{ with } t\in I_n \hbox{ and } b_{n,n}(t)\neq0
\end{equation}
the \emph{normalized} proper Zolotarev polynomials $Z_{n,t}=\widetilde{Z}_{n,t}/L_n(t)$ with $||Z_{n,t}||_{\infty}$$=1$.  
For a prescribed $s=s_0\in (\tan^2\left(\frac{\pi}{2n}\right),\infty)$
one then has to equate $b_{n-1,n}(t)/$ $b_{n,n}(t)$ with $(-n s_0)$ and to solve for $t$,
and finally to insert the unique solution
$t=t_0\in I_n$ into $Z_{n,t}(x)/b_{n,n}(t)$ in order to get the desired solution $\widetilde{Z}^{*}_{n,s_0}(x)=Z_{n,t_0}(x)/b_{n,n}(t_0)$ 
in (\ref{genZFP}) for the given $s=s_0$.

So the key question is: How to choose the parameter intervals $I_n$ and the parameterized coefficients
$a_{k,n}(t)$ and $-ns(t)$ in (\ref{mzolpargen}) respectively $b_{k,n}(t)$ in (\ref{nzolpargen})?
Before providing an answer for $n=7$ we first allude to known solutions $Z_{n,t}$
(possibly after some rearrangement) for the polynomial degrees $2\le n\le 6$:
  
For $n=2$ and $n=3$ see \cite{Carlson83}, \cite{Grasegger17}, \cite[p. 246]{Lebedev94}, \cite{AAMarkov1889}, \cite[p. 156]{Paszkowski62}, 
\cite{Rack17b}, \cite[p. 98]{Voronovskaja70}.
  
For $n=4$ see \cite{Grasegger17}, \cite[p. 246]{Lebedev94}, \cite[p. 73]{VAMarkov}, \cite{Rack89}, \cite{Rack17a}, \cite{Shadrin04},
and note the remarkable comment by Shadrin \cite[Section 1.4]{Shadrin04} as quoted in the Abstract of the present paper.
  
Still in 2014 Shadrin \cite[p. 1185]{Shadrin14} was right in writing that \emph{there is no explicit expression for [normalized proper] 
Zolotarev polynomials of degree $n>4$}. But already in 2017 Grasegger \& Vo \cite{Grasegger17} provided such an explicit expression 
for the degree $n=5$, see also \cite{Collins96}, \cite{Malyshev02}, \cite{Rack17b}.

Only two years later the present authors provided such an explicit expression for the degree $n=6$, 
see \cite{RackVajda19}. It goes without saying that the complexity of the explicit expressions (\ref{mzolpargen}) and (\ref{nzolpargen}) 
increases dramatically when the degree $n$ grows,
see also corresponding remarks in \cite[p. 511]{Bernstein1952}, \cite[p. 21]{Bogatyrev02}, \cite[p. 932]{Malyshev02}. 
A further complication creeps in due to the fact that the parametrization in (\ref{mzolpargen}) and 
(\ref{nzolpargen}) is a radical one for $n\in\{5,6,7\}$
(obtained by computer-aided symbolic computation), whereas it is a rational one for $n\in\{2,3,4\}$
(obtained by pencil and paper). This may explain the time gap of 125 years between the parameterized solution for
$n=4$ in \cite{VAMarkov} and the parameterized solution for $n=5$ in \cite{Grasegger17}.
Furthermore, the case $n=7$ among the radical parametrizations is exceptional and hence requires a special treatment, see \cite[p. 179]{Grasegger17} and Section \ref{comppath} below.

It follows from Approximation Theory, see \cite{Achieser98}, \cite[p. 280]{Achieser03}, \cite{Carlson83}, \cite{Erdos42},
\cite[p. 243]{Lebedev94}, \cite[p. 404]{Milovanovic}, \cite[p. 67]{Peherstorfer99} that on the solution
$\widetilde{Z}_{n,t}$ in (\ref{mzolpargen}) 
there can be imposed, without loss of generality, certain definite conditions: There must exist $n$ equioscillation points
$-1=z_0(t)$ $<z_1(t)<\dots<z_{n-2}(t)<z_{n-1}(t)=1$ on $\boldsymbol{I}$, where $\widetilde{Z}_{n,t}$ 
attains the values $\pm L_n(t)$ 
alternately, and its first derivative vanishes at the interior equioscillation points. One may assume that at
$-1=z_0(t)$ the value $(-1)^n L_n(t)$ and hence at $z_{n-1}(t)=1$ the value $-L_n(t)$ is attained.
Furthermore, there exists an interval $[\alpha(t),\beta(t)]$ to the right of $\boldsymbol{I}$ whose endpoints are
also equioscillation points of $\widetilde{Z}_{n,t}$ (with value $-L_n(t)$ at $\alpha(t)$ and value $L_n(t)$ at $\beta(t)$),
and there exists a point $\gamma(t)=(\alpha(t)+\beta(t))/2-s(t)$ with $1<\gamma(t)<\alpha(t)<\beta(t)$ where the first derivative of $\widetilde{Z}_{n,t}$
vanishes. In addition, the uniform norm of  $\widetilde{Z}_{n,t}$ on $\boldsymbol{I}\cup [\alpha(t),\beta(t)]$ 
must be $L_n(t)$, and  $\widetilde{Z}_{n,t}$ 
must satisfy the Abel-Pell differential equation \cite[p. 17]{Achieser98}
\begin{equation}\label{AbelPellEquation}
\frac{(1-x^2)(x-\alpha(t))(x-\beta(t))(\widetilde{Z}_{n,t}\,'(x))^2}{n^2(x-\gamma(t))^2}+(\widetilde{Z}_{n,t}(x))^2=(L_n(t))^2,
\end{equation}
and the points $z_1(t),\dots, z_{n-2}(t)$, $\alpha(t)$, $\beta(t)$ 
must satisfy the Peherstorfer-Schiefermayr system of nonlinear equations \cite[p. 68]{Peherstorfer99}
\begin{align}
\alpha(t)+\beta(t)+2\sum_{j=1}^{n-2}z_{j}(t)-2n s(t)=0 \label{PSE}\\
(-1)^k+2\sum_{j=1}^{n-2}(-1)^j (z_{j}(t))^k+(-1)^{n-1}(1+(\alpha(t))^k-(\beta(t))^k)=0\label{PSE2}\\
\hbox{for } k=1,\dots,n-1.\nonumber
\end{align}

Analogous conditions can be imposed on $Z_{n,t}$ with $L_n(t)$ being replaced by $1$. Note that in literature also the polynomials
$-\widetilde{Z}_{n,t}$ and $-Z_{n,t}$ go by the name of monic respectively normalized proper Zolotarev polynomials.
For abbreviation we henceforth set
$\alpha(t)=\alpha$, $\beta(t)=\beta$, $\gamma(t)=\gamma$, $z_{k}(t)=z_k$ for $k=0,\dots,n-1$.
Following \cite{MMA1} we denote by ${\rm Root[}p(x),k{\rm]}$ the $k$-th root of the polynomial equation $p(x)=0$.


\section{Explicit analytical one-parameter power form representation of the normalized proper 
    Zolotarev polynomials of degree $n=7$}\label{sec7}

Our main result is the representation of the family of normalized proper Zolotarev polynomials $Z_{7,t}$ of degree $7$ 
in the parameterized power form (\ref{nzolpargen}), which implies the representation of $\widetilde{Z}_{7,t}$ in the form (\ref{mzolpargen}).  
Moreover, we provide the two normalized improper Zolotarev polynomials to which $Z_{7,t}$ transforms when the parameter $t$ tends towards the boundaries of $I_7$. The symbolic computations which have led us to Theorem \ref{mainthm} is demonstrated in Section \ref{comppath}.
 
\begin{theorem}\label{mainthm}
Let $t\in I_7=(\theta,-13)$, where $\theta={\rm Root[}-1847-93x+27x^2+x^3,1{\rm]}=-27.963755\ldots$, $\omega=\omega(t)=\sqrt{3 (506 + 75 t - t^3)}$, and
$q=q(t)=2^{14}3^3(1+t)^{12}$.
Then the septic normalized proper Zolotarev polynomial can be parametrized as follows: 
\begin{align}\label{mainthmformula}
&Z_{7,t}(x)=\sum_{k=0}^{7} b_{k,7} (t) x^k=
\frac{1}{q}\times\nonumber\\
&\big((p_{01}+\omega p_{02}) - \sqrt{p_{11}+\omega p_{12}}x+(p_{21}+\omega p_{22})x^2+\sqrt{p_{31}+\omega p_{32}}x^3+\\
&(p_{41}+\omega p_{42}) x^4- \sqrt{p_{51}+\omega p_{52}}x^5+(p_{61}+\omega p_{62})x^6+\sqrt{p_{71}+\omega p_{72}}x^7\big),\nonumber
\end{align}
where the polynomials $p_{k1}=p_{k1}(t)$ and $p_{k2}=p_{k2}(t)$, $0\le k\le7$, are given below:

\footnotesize
\begin{align*}
&p_{01}= -18763256064938069 - 14792872686537861 t - 4057867882494249 t^2 -\\ 
& 256473877330753 t^3 + 85285038707343 t^4 + 17823637386255 t^5 + \\
& 692357704507 t^6 - 127085232717 t^7 - 14780540079 t^8 -  560972063 t^9 - 31101579 t^{10} - \\
&2597811 t^{11} + 82613 t^{12} - 1323 t^{13} + 57 t^{14} + t^{15},\\
\\
&p_{02}=-2 (-11 + t) (13 + t) (-1674587596733 - 1209041071723 t -\\ 
&   302152954691 t^2 - 21722362269 t^3 + 3705247230 t^4 + 
   789592530 t^5 +\\
&    56234874 t^6 + 3485670 t^7 + 365007 t^8 +    23113 t^9 + 953 t^{10} + 7 t^{11});
\end{align*}

\begin{align*}
&p_{21}=3 (18693496205907553 + 14653197505814513 t + 3981810914612581 t^2 + \\
&   239053845556285 t^3 - 102939835099507 t^4 - 25328343306323 t^5 - \\
&   1853461246959 t^6 + 107242836249 t^7 + 29544857619 t^8 + \\
&   2472362275 t^9 + 146645759 t^{10} + 4570855 t^{11} - 537665 t^{12} + \\
&   13055 t^{13} + 715 t^{14} + 3 t^{15}),\\
\\
&p_{22}=6(-11+t)(-21838212747157 - 17497718905404 t - 5120760089418 t^2 -\\ 
&   504642284812 t^3 + 58729363461 t^4 + 20582639496 t^5 + \\
&   2601849972 t^6 + 260190216 t^7 + 24607269 t^8 + 1916084 t^9 + \\
&   126006 t^{10} + 2820 t^{11} + 11 t^{12});
\end{align*}

\begin{align*}
&p_{41}=-55871225142758423 - 43540681750709607 t - 11717614079999475 t^2 - \\
& 665505935839675 t^3 + 361155464423541 t^4 + 98101500744117 t^5 + \\
& 8901143308057 t^6 - 283763308671 t^7 - 131298459093 t^8 - \\
& 12236812901 t^9 - 634905225 t^{10} + 4984959 t^{11} + 5484599 t^{12} - \\
& 90153 t^{13} - 5469 t^{14} - 5 t^{15},\\
\\
&p_{42}=2 (-11 + t) (65720348410115 + 52809871430820 t + 15313261582566 t^2 + \\
&   1273925478740 t^3 - 273249867411 t^4 - 81619243704 t^5 - \\
&   10990866156 t^6 - 1213822008 t^7 - 112375539 t^8 - 9007084 t^9 - \\
&   714138 t^{10} - 12060 t^{11} + 35 t^{12});
\end{align*}

\begin{align*}
&p_{61}=-(-11 + t)^2 (-153338781731665 - 146672852170183 t - \\
&   57053827731174 t^2 - 10853963116498 t^3 - 364563173779 t^4 + \\
&   353504395539 t^5 + 100621371036 t^6 + 14639637108 t^7 + \\
&   1357232985 t^8 + 82116623 t^9 + 2086442 t^{10} - 122370 t^{11} - \\
&   3157 t^{12} + 5 t^{13}),\\
\\
&p_{62}=-2 (-11 + t) (21975348926173 + 17708836243740 t + 5088010797018 t^2 + \\
&   344542288492 t^3 - 123507628749 t^4 - 33841275336 t^5 - \\
&   4705962132 t^6 - 534799944 t^7 - 46784493 t^8 - 3924308 t^9 - \\
&   371622 t^{10} - 4644 t^{11} + 61 t^{12});
\end{align*}

\begin{align*}
&p_{11}=(-11 + t)^2 (13 + t) (452216947186296794781508787221 + \\
&   777864295951362656514677949267 t + \\
&   607534824156974545331687561835 t^2 + \\
&   288634496196055112144179788837 t^3 + \\
&   94921610624381795338201698150 t^4 + \\
&   23398676158988289024635179242 t^5 + \\
&   4516301732554082896075701906 t^6 + \\
&   670693737388662494219775918 t^7 + 65114539902315732706516167 t^8 + \\
&   225857740971173605970001 t^9 - 1430942150712305946880479 t^{10} - \\
&   342406310658223828546833 t^{11} - 50591510405201802666684 t^{12} - \\
&   5031713916356520207588 t^{13} - 245445214348728475284 t^{14} + \\
&   18666155444026029204 t^{15} + 5518191822155449683 t^{16} + \\
&   667345379819561157 t^{17} + 56579749561878525 t^{18} + \\
&   3879945477820083 t^{19} + 222845520949494 t^{20} + \\
&   10128182363994 t^{21} + 372786923490 t^{22} + 14225061342 t^{23} + \\
&   269308977 t^{24} + 2533431 t^{25} + 12183 t^{26} + 25 t^{27}),\\
\\
&p_{12}=-4 (-11 + t) (13 + t) (31914659841429719306288181169 + \\
&   49622272083198674583533509976 t + \\
&   34282407163818360892131872011 t^2 + \\
&   14089140494250744184954719632 t^3 + \\
&   3939130934203140296373699118 t^4 + \\
&   816295770208132930522370528 t^5 + \\
&   128243247492098759194165618 t^6 + 13314328240910465891671856 t^7 + \\
&   128141672346772035888475 t^8 - 286794878594776251702280 t^9 - \\
&   69849141585341704716287 t^{10} - 10401588386734438049632 t^{11} - \\
&   1051853965102136706572 t^{12} - 44866622100154016896 t^{13} + \\
&   7156442882520008620 t^{14} + 1751453046589473632 t^{15} + \\
&   198334297114129879 t^{16} + 15312353847501512 t^{17} + \\
&   913407761253853 t^{18} + 43228054413008 t^{19} + 1867714065406 t^{20} + \\
&   77464313504 t^{21} + 424793026 t^{22} + 97097840 t^{23} + 2479357 t^{24} + \\
&   20456 t^{25} + 55 t^{26});
\end{align*}

\begin{align*}
&p_{31}=(-11 + t)^2 (13 + t) (4067505216555761454939774893653 + \\
&   6987837642353627260453914645459 t + \\
&   5438613582037781367520396463019 t^2 + \\
&   2560707467679066258265342737957 t^3 + \\
&   824972899826967571738685701350 t^4 + \\
&   195202779101501586572821066218 t^5 + \\
&   35131305236199004729935240210 t^6 + \\
&   4678831943670682549788428718 t^7 + \\
&   361836187431226939045737351 t^8 - 20164101539468052079040559 t^9 - \\
&   13174289998662412173304863 t^{10} - 2713002595179009782526225 t^{11} - \\
&   370593851448133729776060 t^{12} - 34447522144717830870756 t^{13} - \\
&   1447965995249043223956 t^{14} + 171750733434992851092 t^{15} + \\
&   44014595143625089299 t^{16} + 5235351517006620741 t^{17} + \\
&   455693290860748989 t^{18} + 34604751969519795 t^{19} + \\
&   2164988949830262 t^{20} + 86364631885914 t^{21} + 2806443018594 t^{22} + \\
&   191205136350 t^{23} + 833140593 t^{24} - 18388809 t^{25} - 108585 t^{26} +    25 t^{27}),\\
\\
&p_{32}=4 (-11 + t) (13 + t) (-287083238187382918159834379953 - \\
&   445801161270188409230824278008 t - \\
&   306728192538886372185457889323 t^2 - \\
&   124540340456986580024209236752 t^3 - \\
&   33754019007714977189198810542 t^4 - \\
&   6551380444197468706377349664 t^5 - \\
&   918445270855125642258035890 t^6 - 75504009528614463821485232 t^7 + \\
&   3790216746747400729507877 t^8 + 2726753925037134860883880 t^9 + \\
&   554883207716411782569503 t^{10} + 72654419716931233973344 t^{11} + \\
&   6402816324184929410828 t^{12} + 222919538823946521856 t^{13} - \\
&   45365822597641351468 t^{14} - 11428229964701074016 t^{15} - \\
&   1568015533264197079 t^{16} - 148747442138288168 t^{17} - \\
&   8894717908807357 t^{18} - 336458623759952 t^{19} - \\
&   23802738217342 t^{20} - 1386825048800 t^{21} + 41898906878 t^{22} - \\
&   3064255472 t^{23} - 52362109 t^{24} - 143240 t^{25} + 425 t^{26});
\end{align*}

\begin{align*}
&p_{51}=9 (-11 + t)^2 (13 + t) (451683738573925104075654483397 + \\
&   775028304316495850519095857571 t + \\
&   601098380398696980859206188475 t^2 + \\
&   280438400343985227860396599637 t^3 + \\
&   88413801617548106639965501190 t^4 + \\
&   19995978992632195901029965642 t^5 + \\
&   3311133099437108231552511346 t^6 + \\
&   381643355735517005395779022 t^7 + 20320264324033151140865367 t^8 - \\
&   3195804458535397562311231 t^9 - 1143533730000698431197679 t^{10} - \\
&   199161415603964037521793 t^{11} - 24372543185825459073788 t^{12} - \\
&   2046716710595960850596 t^{13} - 54372780278781597012 t^{14} + \\
&   15828476336106803540 t^{15} + 2730101569863744803 t^{16} + \\
&   213625544957043957 t^{17} + 16273483271374381 t^{18} + \\
&   2177665019822083 t^{19} + 170504391614358 t^{20} + \\
&   1407379805882 t^{21} - 64033877566 t^{22} + 34948495614 t^{23} - \\
&   280552415 t^{24} - 2994617 t^{25} - 3033 t^{26} + 9 t^{27}),\\
\\
&p_{52}=36 (-11 + t) (13 + t) (-31880883382920453164668056673 - \\
&   49442138480111650024519040920 t - \\
&   33876857431725516714538231899 t^2 - \\
&   13586828819760368667536109968 t^3 - \\
&   3564076460496389044892230862 t^4 - \\
&   641364123356694605499355488 t^5 - 76820427126193234239727954 t^6 - \\
&   4024537857983776624054576 t^7 + 721610167022215867925013 t^8 + \\
&   250695526123771611854152 t^9 + 42150204570837646184719 t^{10} + \\
&   4607435727083778525024 t^{11} + 285829921804590641228 t^{12} - \\
&   1218353702615312512 t^{13} - 2310870292391561964 t^{14} - \\
&   479326657450856288 t^{15} - 99660901007536295 t^{16} - \\
&   11197664464328328 t^{17} - 270797298387469 t^{18} + \\
&   23020663051568 t^{19} - 2954480013726 t^{20} - 221351668256 t^{21} + \\
&   23088702814 t^{22} - 709577328 t^{23} - 2352877 t^{24} + 17624 t^{25} +   57 t^{26});
\end{align*}

\begin{align*}
&p_{71}=(-11+t)^5(13+t) (-339168361039365519460069487 - 673770165671451476923579920 t - \\
&  624601585018296333977402028 t^2 - 361544324905454191834056976 t^3 - \\
&  147614051919387405233687790 t^4 - 45526903723214507531501616 t^5 - \\
&  11086126516765664128988252 t^6 - 2197894018386795481276848 t^7 - \\
&  362820760120106657958897 t^8 - 50767332334243984565152 t^9 - \\
&  6127600928703138323160 t^{10} - 650077861901582788512 t^{11} - \\
&  61158225664029403460 t^{12} - 5020585598717657952 t^{13} - \\
&  364251213557808792 t^{14} - 29139030508404832 t^{15} - \\
&  3029017347965889 t^{16} - 254576059234128 t^{17} - 7598948126588 t^{18} + \\
&  550936914864 t^{19} + 31892020818 t^{20} - 830153584 t^{21} + \\
&  4681716 t^{22} - 6384 t^{23} + t^{24}),\\
\\
&p_{72}=4(-11+t)^4(13+t) (23939066421797920012252547 + 43604945745949888884259555 t + \\
&  36595686922095182565230747 t^2 + 18937189808814759845550851 t^3 + \\
&  6829585752660013948505437 t^4 + 1838774116182807709355117 t^5 + \\
&  385867254213289689843597 t^6 + 64938062296917527124693 t^7 + \\
&  8939182382326190698494 t^8 + 1017640859072593780990 t^9 + \\
&  95200006411552224398 t^{10} + 7145677316358425566 t^{11} + \\
&  448416529340354362 t^{12} + 29699683025185562 t^{13} + \\
&  1131537557463418 t^{14} - 300028277805366 t^{15} - \\
&  48306393782385 t^{16} + 333976819119 t^{17} + 484360416551 t^{18} + \\
&  17604770671 t^{19} - 1059177895 t^{20} + 10971881 t^{21} - 30551 t^{22} +  17 t^{23}).
\end{align*}

\normalsize
The limiting normalized improper Zolotarev polynomials read:
\begin{equation}\label{limpoly1}
\lim_{t\to-13}Z_{7,t}(x)=-T_6(x)=1-18x^2+48x^4-32x^6,
\end{equation}
\begin{equation}\label{limpoly2}
\lim_{t\to \theta}Z_{7,t}(x)=T_7(y)=-7 y + 56 y^3 - 112 y^5 + 64 y^7
\end{equation}
with $y=\left((1+x)\cos^2(\frac{\pi}{14})-1\right)$, see also \cite[pp. 247-248]{Lebedev94}.
The coefficients of $\widetilde{Z}_{7,t}$ are given by $a_{k,7}(t)=b_{k,7}(t)/b_{7,7}(t), \quad 0\le k\le7$.
\end{theorem}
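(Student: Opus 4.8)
The plan is to exploit uniqueness. By the equioscillation characterization recalled above — whose conditions are not only necessary but, by the classical Chebyshev–Kolmogorov criterion, also sufficient and determine the extremal polynomial uniquely — $Z_{7,t}$ is the unique normalized polynomial of degree $7$ realizing the prescribed $\pm1$ alternation on $\boldsymbol I\cup[\alpha,\beta]$, equivalently the unique solution of the Abel-Pell equation (\ref{AbelPellEquation}) (with $L_7(t)$ set to $1$) under the ordering $1<\gamma<\alpha<\beta$. Hence the task is not to rederive the formula but to verify that the explicit right-hand side of (\ref{mainthmformula}), produced constructively along the computational path of Section \ref{comppath}, meets these conditions identically in $t\in I_7$.

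First I would recover the auxiliary data and reformulate the target. The second leading coefficient yields $s(t)=-b_{6,7}(t)/\bigl(7\,b_{7,7}(t)\bigr)$ and then $\gamma=(\alpha+\beta)/2-s$. Substituting the product form $Z_{7,t}'=7\,b_{7,7}\prod_{j=1}^{5}(x-z_j)\,(x-\gamma)$ into (\ref{AbelPellEquation}) and cancelling $49\,(x-\gamma)^2$ turns the Abel-Pell equation into the single polynomial identity in $x$
\begin{equation*}
1-Z_{7,t}(x)^2=b_{7,7}(t)^2\,(1-x^2)\,(x-\alpha)(x-\beta)\prod_{j=1}^{5}(x-z_j)^2,
\end{equation*}
the double factors encoding the interior extrema $z_1,\dots,z_5$ (where $|Z_{7,t}|=1$ and $Z_{7,t}'=0$) and the simple factors the boundary contacts at $\mp1,\alpha,\beta$. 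The heart of the proof is therefore to show that $(1-x^2)$ divides $1-Z_{7,t}^2$ and that the complementary degree-$12$ factor splits as an explicit quadratic times a perfect square, with the stated leading constant $b_{7,7}^2$.

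The hard part will be carrying out this verification through the nested radicals. The coefficients $b_{k,7}(t)$ live in the tower $\mathbb{Q}(t)\subset\mathbb{Q}(t)[\omega]\subset\mathbb{Q}(t)[\omega][r_1,r_3,r_5,r_7]$, where $\omega^2=3(506+75t-t^3)$ and $r_k^2=p_{k1}+\omega\,p_{k2}$ for the odd indices $k\in\{1,3,5,7\}$; it is exactly this second, genuinely non-simple layer — square roots of quantities that themselves contain $\omega$ — that sets $n=7$ apart. Expanding $Z_{7,t}^2$ produces the squares $r_k^2$, which reduce by their defining relations, together with the mixed products $r_ir_j$ ($i\neq j$). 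The decisive structural fact to certify is that the four radicands $p_{k1}+\omega p_{k2}$ lie in a single square class of $\mathbb{Q}(t)[\omega]$, i.e. their pairwise products are perfect squares there; equivalently each $r_k$ is a rational-in-$\omega$ multiple of one common second radical $\sqrt{d}$, so the tower already closes at $\mathbb{Q}(t)[\omega][\sqrt{d}]$ and every mixed product $r_ir_j$ collapses back into $\mathbb{Q}(t)[\omega]$. With these square-class relations certified, the displayed identity reduces coefficientwise in $x$ to finitely many polynomial identities in $t$ over $\mathbb{Q}(t)[\omega]$, each checked symbolically in \emph{Maple} and \emph{Mathematica}. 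It then remains to confirm, for $t\in I_7=(\theta,-13)$, the ordering $1<\gamma(t)<\alpha(t)<\beta(t)$ and the correct sign alternation (so that the improper branch is excluded), the normalization $\|Z_{7,t}\|_\infty=1$, and that $s(t)$ maps $I_7$ bijectively onto $(\tan^2(\tfrac{\pi}{14}),\infty)$ — this last point securing the injective one-parameter dependence asserted in (\ref{nzolpargen}).

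Finally I would settle the two boundary limits. As $t\to-13$ (where $s\to\infty$) the common factor $(13+t)$ carried by the odd radicands, and the high power of $(-11+t)$ in $p_{71},p_{72}$, force the odd coefficients and the leading coefficient $b_{7,7}$ to vanish; the degree collapses and $Z_{7,t}\to-T_6$, giving (\ref{limpoly1}). As $t\to\theta$, the proper–improper threshold $s\to\tan^2(\tfrac{\pi}{14})$, the interval $[\alpha,\beta]$ merges with $\boldsymbol I$ and $Z_{7,t}$ tends to the affinely shifted Chebyshev polynomial $T_7(y)$ with $y=(1+x)\cos^2(\tfrac{\pi}{14})-1$, giving (\ref{limpoly2}); here $\theta$ enters as the real root of $-1847-93x+27x^2+x^3$ that pins the collapse. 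Both limits are confirmed by substituting the boundary value, resolving the resulting indeterminacies in $\omega$ and the $r_k$, and comparing coefficients. I expect the square-class reduction of the mixed products $r_ir_j$ to be the genuinely delicate and computation-heavy step; once those relations are certified, the remaining identities and limits are mechanical.
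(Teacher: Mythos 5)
Your proposal is correct in outline, but it runs in the opposite direction to the paper's argument. You take the displayed coefficients as given and verify them: by the Chebyshev alternation criterion the solution of ZFP for a fixed admissible $s$ is unique, so it suffices to check that the right-hand side of (\ref{mainthmformula}) satisfies the Pell-type factorization $1-Z_{7,t}^2=b_{7,7}^2(1-x^2)(x-\alpha)(x-\beta)\prod_{j=1}^{5}(x-z_j)^2$ with $1<\alpha<\beta$ and $z_j\in(-1,1)$, that $s(t)=-b_{6,7}/(7\,b_{7,7})$ sweeps $(\tan^2(\tfrac{\pi}{14}),\infty)$ bijectively as $t$ runs over $I_7$, and the two boundary limits (\ref{limpoly1})--(\ref{limpoly2}). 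The paper instead \emph{derives} the formulas: its key Lemma in Section \ref{comppath} is that, although the relation curve $H_7(\alpha,\beta)=0$ has genus $4$ (the obstruction identified in \cite{Grasegger17}), the projection $P_7(a_3,a_5)=0$ of the coefficient space curve is elliptic; a Weierstrass normal form and inverse morphism give the simple radical parametrization (\ref{simpleradicalpm35}), after which Groebner bases extracted from the Abel-Pell equation (\ref{AbelPellEquation}) and resultants yield $\alpha(t),\beta(t),s(t)$ in (\ref{pms}) and the remaining coefficients, with $I_7$ fixed by matching the improper limit; the identity check that you place at the center of the proof is relegated in the paper to a brief remark that a CAS simplification confirms the Abel-Pell equation. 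Your route buys a logically self-contained certificate of the theorem as stated, with no need to justify the choices made during the derivation (selection of resultant factors, morphism inversion, branch choices); the paper's route buys an explanation of where the parameter $t$ and the formulas come from, a method aimed at the general obstruction raised in \cite{Grasegger17}, and the auxiliary quantities $\alpha,\beta,\gamma,s,z_j$ that it uses later for the Peherstorfer--Schiefermayr verification. Your identification of the square-class collapse as the crux is exactly right, and it is what the paper's construction guarantees implicitly: since the monic odd coefficients $a_{1,7},a_{3,7},a_{5,7}$ are simple radicals (rational in $\omega$, cf. (\ref{simpleradicalpm35})) and $b_{k,7}=a_{k,7}/L_7(t)$ with $L_7$ a single nested radical, the four odd radicands $p_{k1}+\omega p_{k2}$ necessarily lie in one square class of $\mathbb{Q}(t)[\omega]$, so the mixed products $r_ir_j$ do collapse as you require. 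Two points to tighten: in a pure verification $\alpha,\beta,z_j$ are not available a priori, so they must be read off from the factorization of $1-Z_{7,t}^2$ (or imported from (\ref{pms})) rather than used to define $\gamma$ at the outset; and you should verify that the radicands $p_{k1}+\omega p_{k2}$ ($k$ odd) are positive on $I_7$, so that the square roots in (\ref{mainthmformula}) are real and the stated sign pattern is the one being certified.
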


\begin{example}\label{exarad1a}
We prescribe  $t=t_0=-21 \in I_7$. 
For the septic normalized proper Zolotarev polynomial $Z_{7,t_0}=\widetilde{Z}_{7,t_0}/L_7(t_0)$ we get by insertion of $t_0$ to (\ref{mainthmformula}):
\begin{equation}\label{exa1zpol}
Z_{7,t_0}(x)=\sum_{k=0}^7 b_{k,7}(t_0)x^k
\end{equation}
with
\begin{align}
&b_{0,7}(t_0)=&\frac{5236829509-598896224\sqrt6}{6591796875}=0.5718985919\ldots\\
&b_{1,7}(t_0)\!=\!&\!\!\!\!\frac{-16\sqrt{2 (1174132032293998751 \!+\! 484070220858892414\sqrt6)}}{6591796875}=\nonumber\\
&&-5.2731972200 \ldots\\
&b_{2,7}(t_0)=&\frac{-8(1876889773+537098572\sqrt6)}{2197265625}=-11.6235640503\ldots\\
&b_{3,7}(t_0)\!=\!&\!\!\!\!\!\!\!\frac{16\!\sqrt{2 (56229826406521238759\!+\!22960487256932311726\sqrt6)}}{6591796875}=\nonumber\\
&&36.4042451538 \ldots\\
&b_{4,7}(t_0)=&\frac{8(13748181869+5603740016\sqrt6)}{6591796875}=33.3438497337\ldots\\
&b_{5,7}(t_0)=&\!\!\!\!\!\!\frac{-32\!\sqrt{2 (4907729982259476719\!+\!2003572376153817166\sqrt6)}}{2197265625}=\nonumber\\
&&-64.5264455703 \ldots\\
&b_{6,7}(t_0)=&\frac{-256(299877839+122424446\sqrt6)}{6591796875}=-23.2921842753\ldots\\
&b_{7,7}(t_0)=&\!\!\!\!\!\!\!\!\!\!\!\!\!\!\frac{1024\sqrt{2 (11553696783009431\!+\!4716776960201434\sqrt6)}}{6591796875}=\nonumber\\
&&33.39539763 \ldots\, . \label{exa1b7}
\end{align}
Multiplying these coefficients $b_{k,7}(t_0)$ by $L_7(t_0)=1/b_{7,7}(t_0)$ yields the coefficients $a_{k,7}(t_0)$ of $\widetilde{Z}_{7,t_0}$.
\qed

 Figure \ref{fig:1} displays $\widetilde{Z}_{7,t_0}$ and Figure \ref{fig:2} displays $Z_{7,t_0}$.
 \begin{figure}
\centering
  \begin{minipage}[b]{0.45\textwidth}
    \includegraphics[width=\textwidth]{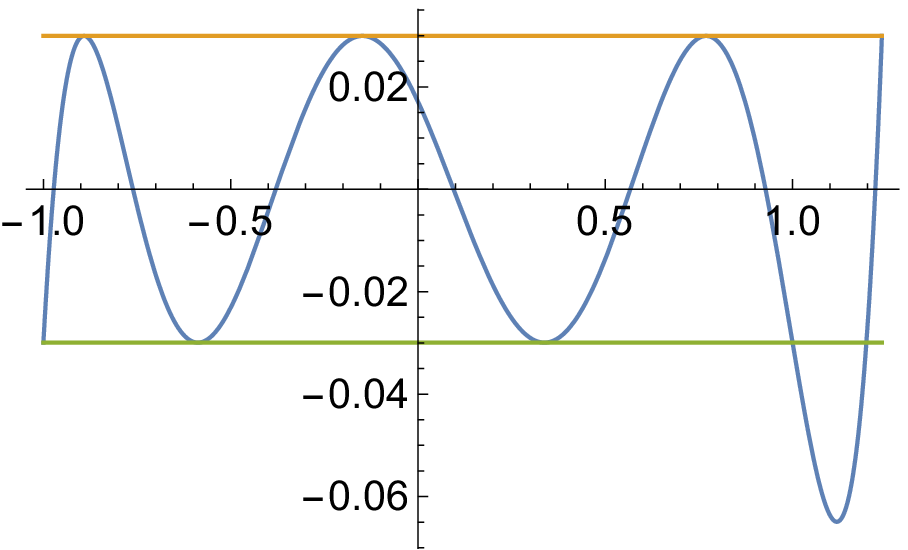}
    \caption{$\widetilde{Z}_{7,t_0=-21}$}
    \label{fig:1}
  \end{minipage} 
  \begin{minipage}[b]{0.45\textwidth}
    \includegraphics[width=\textwidth]{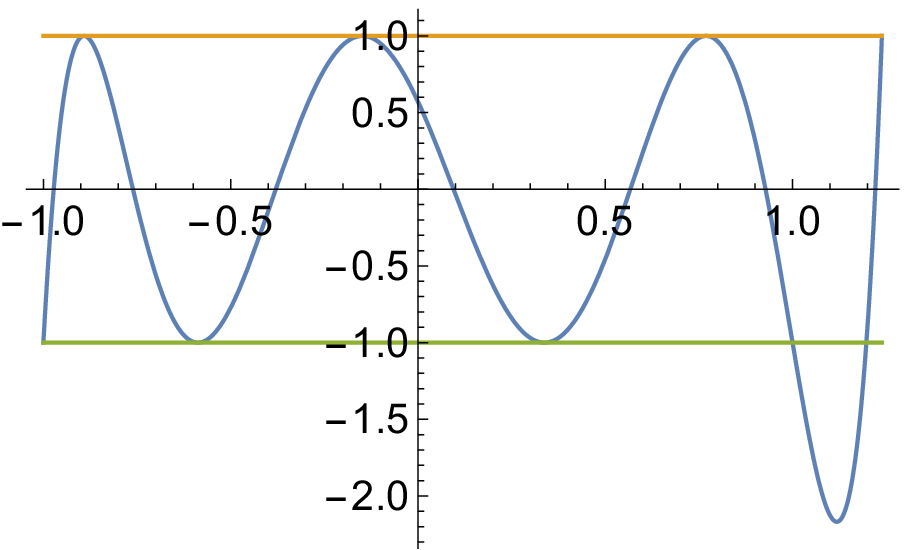}
    \caption{$Z_{7,t_0=-21}$}
    \label{fig:2}
  \end{minipage}
\end{figure}
\end{example}

\section{Derivation by symbolic computation}\label{comppath}
It is known from literature (see 
\cite{Hoeij1}, \cite{Hoeij2}) that algorithms exist for the parametrization of plane algebraic curves of genus $0$ and $1$,  and their implementations are available in computer algebra systems. We will use the \emph{algcurves} package in \emph{Maple}. 
However, as pointed out in \cite[p. 179]{Grasegger17}, the defining reduced relation curve $H_7(\alpha,\beta)=0$, whose points determine the endpoints of the interval 
$[\alpha,\beta]$ (to the right of $\boldsymbol{I}$) on which $Z_{7,t}$ also equioscillates, is  a genus $4$ curve. Therefore the direct parametrization of the curve $H_7=0$, where
(see also Formula (7.9) in \cite{Rack18})
\begin{align}
&H_7(\alpha,\beta)=\\
&(4096  - 10240 \beta^2 + 14080 \beta^4 - 9984 \beta^6 +  1776 \beta^8 +280 \beta^{10} - 7 \beta^{12})+ \nonumber \\
&\alpha (12288 \beta - 13312 \beta^3 + 5632 \beta^5 - 5760 \beta^7 + 1136 \beta^9 + 28 \beta^{11})+ \nonumber\\
& \alpha^2 (-2048 + 25088 \beta^2 - 29952 \beta^4 + 10560 \beta^6 - 3624 \beta^8 + 42 \beta^{10})+ \nonumber\\
& \alpha^3 (-17408 \beta + 33792 \beta^3 - 19328 \beta^5 + 3648 \beta^7 - 484 \beta^9) + \nonumber\\
& \alpha^4 (-8448 - 8448 \beta^2 + 23712 \beta^4 - 7632 \beta^6 + 1311 \beta^8) + \nonumber\\
& \alpha^5 (1536 \beta - 10624 \beta^3 + 11680 \beta^5 - 1800 \beta^7) + \nonumber\\
& \alpha^6 (7424 - 4288 \beta^2 - 3472 \beta^4 + 1260 \beta^6)+ \nonumber\\
& \alpha^7 (4992 \beta - 4032 \beta^3 - 168 \beta^5) + \alpha^8 (-1040 + 1976 \beta^2 - 441 \beta^4) + \nonumber\\
& \alpha^9 (-144 \beta + 364 \beta^3) + \alpha^{10} (184 - 118 \beta^2) + \alpha^{11} 12 \beta+\alpha^{12}, \nonumber
\end{align}
 and thus the proposed way in \cite{Grasegger17} of parametrizing $\widetilde{Z}_{7,t}$ (or $Z_{7,t}$) does not work with the existing implemented algorithms.
This is why in \cite{Grasegger17} the septic case is called a \emph{challenge} which is \emph{subject to further investigation}.  
We remedy this obstacle by the following observation:
\begin{lemma}
Certain 2D projections to the coefficient planes of the algebraic space curve $(a_{0,7}(t),\dots,a_{7,7}(t))\subset \mathbb{R}^8$\ associated to $\widetilde{Z}_{7,t}(x)$, which can be given as a zero-set of  
$P_7(a_j,a_k)$,  have smaller genuses than the defining reduced relation curve $H_7(\alpha,\beta)=0$. In fact, their genuses are determined by the parity of the index-pairs $(j,k),\quad j=0,\dots,5,\, k=j+1,\dots,6$.  
\end{lemma}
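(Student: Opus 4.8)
\noindent\emph{Proof plan.} The plan is to exhibit the function field of the space curve as a short radical tower over $\mathbb{Q}(t)$ and to deduce the genus of each coordinate projection from a single involution whose sign on each coordinate is exactly the parity of the index. First I would record the tower suggested by \eqref{mainthmformula}. Let $E$ be the affine curve $\omega^2 = 3(506+75t-t^3)$; the discriminant of $t^3-75t-506$ is nonzero, so $E$ has genus $1$, with function field $\mathbb{Q}(E)=\mathbb{Q}(t,\omega)$. The even-indexed coefficients $b_{0,7},b_{2,7},b_{4,7},b_{6,7}$ all lie in $\mathbb{Q}(E)$, whereas the odd-indexed ones are square roots $\sqrt{p_{k1}+\omega p_{k2}}$. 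Adjoining $u=\sqrt{p_{71}+\omega p_{72}}$ yields the field $\mathbb{Q}(C)=\mathbb{Q}(E)(u)$ of the space curve, and I would verify by symbolic computation the key fact that each quotient $(p_{k1}+\omega p_{k2})/(p_{71}+\omega p_{72})$, $k\in\{1,3,5\}$, is a square in $\mathbb{Q}(E)$; equivalently, all four odd radicands share one square class. This is precisely the statement that only one nested radical is needed, so that $[\mathbb{Q}(C):\mathbb{Q}(E)]=2$, consistent with $g(C)=g(H_7)=4$ and with Riemann--Hurwitz for the double cover $C\to E$, where $2\cdot4-2 = 2(2\cdot1-2)+R$ forces $R=6$ branch points.

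Granting this, let $\sigma$ be the deck involution of $C\to E$, that is $u\mapsto-u$ with $t,\omega$ fixed. Then $a_{k,7}=b_{k,7}/b_{7,7}$ is $\sigma$-invariant for odd $k$ (a quotient of two anti-invariants lies in $\mathbb{Q}(E)$) and $\sigma$-anti-invariant for even $k$ (an invariant divided by the anti-invariant $b_{7,7}$), while $a_{7,7}\equiv1$; thus the $\sigma$-eigensign of each coordinate is dictated solely by the parity of its index, and this is the mechanism behind the Lemma. For a projection $P\mapsto(a_{j,7}(P),a_{k,7}(P))$ with both indices odd, both coordinates lie in $\mathbb{Q}(E)$, so the projected field is a subfield of a genus-$1$ field and the image curve $P_7(a_j,a_k)=0$ has genus $\le1<4$. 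In the remaining two parity types one coordinate is anti-invariant, its square lies in $\mathbb{Q}(E)$, and $\mathbb{Q}(a_j,a_k)$ is a quadratic extension of an intermediate subfield $F_0\subseteq\mathbb{Q}(E)$; here the genus is read off from Riemann--Hurwitz for that double cover, and because the ramification data depend only on the eigensigns it is the same for all index-pairs of a given parity type.

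To make the values rigorous and explicit I would, for each admissible pair $(j,k)$ with $0\le j<k\le6$, eliminate $t,\omega,u$ from the tower relations together with $a_j=a_{j,7}(t)$ and $a_k=a_{k,7}(t)$ (by resultants or a Gr\"obner basis) to obtain the implicit polynomial $P_7(a_j,a_k)$, check its irreducibility so that the computed genus is that of the actual image, and then apply the genus routine of the \emph{algcurves} package in \emph{Maple} \cite{maple} to confirm the predicted genus for each of the three parity classes and that every value is strictly below $4$. The same computation simultaneously singles out the projections of genus $0$ or $1$, which are exactly those parametrizable by the implemented algorithms of \cite{Hoeij1}, \cite{Hoeij2} and hence usable in Section~\ref{comppath}.

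The main obstacle will be the verification that the three further radicands share the square class of $p_{71}+\omega p_{72}$ over $\mathbb{Q}(E)$: since the $p_{k\ell}$ are polynomials of degree up to $27$ with coefficients of some thirty digits, deciding the square class of a quotient inside $\mathbb{Q}(t)[\omega]/(\omega^2-3(506+75t-t^3))$ is a heavy squarefree-factorization task, and it is this step that both justifies the clean even/odd dichotomy and keeps $\mathbb{Q}(C)$ a mere double cover of $E$. The subsequent elimination producing $P_7(a_j,a_k)$ and the genus computation inherit the same coefficient blow-up, so organizing the computation -- reducing modulo the tower relations at every stage -- so that \emph{algcurves} terminates is where the real effort lies.
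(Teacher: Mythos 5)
You have the logical arrow pointing the wrong way, and in this paper that is a genuine gap, not a stylistic choice. The Lemma is the discovery tool: the paper first obtains $P_7(a_3,a_5)$ by elimination from the defining equations (the Gr\"obner-basis/Abel--Pell machinery of Section \ref{comppath}), certifies $g(P_7(a_3,a_5)=0)=1<4$ with \emph{algcurves}, and only then parametrizes that elliptic curve to get (\ref{simpleradicalpm35}), from which $\alpha(t)$, $\beta(t)$, $s(t)$ and finally Theorem \ref{mainthm} are built. Your proof starts from ``the tower suggested by \eqref{mainthmformula}'', i.e.\ from Theorem \ref{mainthm} itself, so as a proof of the Lemma it is circular. (The paper's proof does offer a resultant reconstruction of $P_7(a_3,a_5)$ from (\ref{simpleradicalpm35}) --- essentially the elimination you propose --- but only as an a posteriori check for the reader, not as the derivation.) Your route could be repaired by first establishing Theorem \ref{mainthm} independently of the Lemma: verify the Abel--Pell identity (\ref{AbelPellEquationn7}) symbolically and show that the parametrized family, together with its limits (\ref{limpoly1})--(\ref{limpoly2}), exhausts the whole one-parameter family of septic proper Zolotarev polynomials, so that the curve you eliminate is the paper's projection curve and not a proper subvariety of it. You do none of this, and it is not a small addition.

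Second, the structural part overreaches. That the genus is constant on each parity class does not follow from your eigensign bookkeeping: Riemann--Hurwitz needs the branch divisor of the specific double cover, i.e.\ the odd-order zeros and poles of the particular anti-invariant coordinate over the particular invariant subfield, and these depend on the pair $(j,k)$, not merely on its parities. So the ``prediction'' you later confirm with \emph{algcurves} was never actually derived, and the rigor of your argument ultimately rests on exactly the same machine computation that constitutes the paper's entire proof (the paper proves the Lemma by one such computed example). What your proposal genuinely adds --- and it is worth keeping --- is the explanation the paper omits: granting Theorem \ref{mainthm}, all odd radicands share one square class, the deck involution $u\mapsto -u$ fixes $a_{j,7}$ for odd $j$ and negates it for even $j$, whence every odd--odd projection lies in the genus-one field $\mathbb{Q}(t,\omega)$ and has genus at most $1<4$. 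That observation is correct and consistent with (\ref{simpleradicalpm35}); it just belongs after Theorem \ref{mainthm} as a remark, not before it as a proof.
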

\begin{proof}
Using the \emph{algcurves} package, direct exact computation reveals, for example, that
$g(P_7(a_3,a_5)=0)=1<4$. 

To save space, we omit to express the plane projection curve $P_7(a_3,a_5)$ by formula, which would be a quite bulky one.
However, the reader is invited to check the genus computation by recovering $P_7(a_3,a_5)$ from (\ref{simpleradicalpm35}) below, as
\begin{equation*}
P_7(a_3,a_5)\!=\!{\rm res}^{2}_{t}\left({\rm res}^{4}_{o}(r_{31}\!+\!o\, r_{32}\!-\!a_3 q_3,o^2\!\!-\!\omega^2),{\rm res}^{4}_{o}(r_{51}\!+\!o\,r_{52}\!-\!a_5 q_5,o^2\!\!-\!\omega^2)\right),
\end{equation*}
where ${\rm res}^{j}_{x}$ denotes the $j$-th factor of the resultant with respect to the variable $x$. 
\end{proof}

Therefore we use first the known algorithm for the radical parametrizaton of the elliptic curve $P_7(a_3,a_5)=0$.
We compute the Weierstrass normal form and an inverse morphism. After simplification, we so obtain the form given in (\ref{simpleradicalpm35}) below:

\begin{align}\label{simpleradicalpm35}
&a_3=a_{3,7}=a_{3,7}(t)=\frac{r_{31}+\omega r_{32}}{q_{3}} \,\wedge\,a_5=a_{5,7}=a_{5,7}(t)=\frac{r_{51}+\omega r_{52}}{q_{5}},\\
&\hbox{where } r_{3i}=r_{3i}(t), r_{5i}=r_{5i}(t), q_3=q_3(t), \hbox{ and } q_5=q_5(t);\quad i=1,2 \nonumber 
\end{align}
and
\begin{align*}
r_{31}&=-67050634131834282413 - 65805513145120988466 t - &\\
 &33209816302438483773 t^2 - 8207018843674538160 t^3 + &\\
 &61248828418901340 t^4 + 573066725434213512 t^5 + &\\
 &157302381643747500 t^6 + 23037858884482608 t^7 + &\\
 &2281318952792298 t^8 + 137865778703060 t^9 - 7570217151078 t^{10} -&\\ 
 &2900985706704 t^{11} - 249483884148 t^{12} - 3952502904 t^{13} + &\\
 &453522684 t^{14} + 18457680 t^{15} + 1707483 t^{16} - 6834 t^{17} - 5 t^{18},&\\ 
 r_{32}&=5715348771948850560 + 3280169094303556224 t + &\\
 &482405786587260288 t^2 - 71294988953030016 t^3 - &\\
 &37631007796747392 t^4 - 8178434211316608 t^5 - &\\
 &1465523678239872 t^6 - 200841049348992 t^7 - 16338603726720 t^8 - &\\
 &1614798985344 t^9 - 380404637568 t^{10} - 52711585920 t^{11} - &\\
 &3232979328 t^{12} - 83650176 t^{13} - 3459456 t^{14} + 98688 t^{15},&
\end{align*}
\begin{align*}  
q_{3}&=(-11 + t)^2 (13 + t)^{10} (-1763 + 75 t + 111 t^2 + t^3)^2,&\\
r_{51}&=3 (22532652131 - 4558144146 t - 7976565369 t^2 - 1580832984 t^3 -&\\ 
   & 52130250 t^4 + 4221396 t^5 + 476790 t^6 + 87336 t^7 +18279 t^8 -&\\ 
   & 1426 t^9 + 3 t^{10}),&\\
r_{52}&=3 (1455149952 + 337123904 t + 36161984 t^2 + 8501568 t^3 +&\\ 
   & 992704 t^4 + 51904 t^5 - 4800 t^6 + 4544 t^7 - 64 t^8),&\\
q_{5}&=(-11 + t)^2 (13 + t)^5 (-1763 + 75 t + 111 t^2 + t^3).&
\end{align*}
 
By using the polynomial $Q_1(a_5,\alpha)$ which was derived using Groebner basis computation from the Abel-Pell differential equation (\ref{AbelPellEquation})
by coefficient comparison, and
the polynomial $Q_2(a_5,t)$ stemming from the parametrization in (\ref{simpleradicalpm35}), and taking the smaller factor of the resultant 
${\rm res}_{a_5}(Q_1,Q_2)$, we obtain
\begin{align}\label{parpolyalphat}
&Q_3(\alpha,t)=\big(11243366790769 + 1414302826044 t - 1165440897006 t^2 - \\ 
& 225646565396 t^3 + 38213311311 t^4 + 11448834552 t^5 - \nonumber \\
& 152423364 t^6 - 231859656 t^7 - 11635425 t^8 + 1600204 t^9 + \nonumber \\ 
& 138450 t^{10} + 732 t^{11}+t^{12}\big)+ \nonumber \\
& \alpha^2\big(-22569735604130 - 3071056384440 t + 2108952113820 t^2 + \nonumber \\
& 410418431848 t^3 - 47150209566 t^4 - 15418665072 t^5 - \nonumber \\
& 125813496 t^6 + 206967312 t^7 + 7843842 t^8 - 1161176 t^9 - \nonumber\\
& 36324 t^{10} + 2184 t^{11} - 2 t^{12}\big)+ \nonumber\\
& \alpha^4\big(11326363139377 + 1656707880828 t - 943672290990 t^2 - \nonumber \\
& 185096942420 t^3 + 8526067407 t^4 + 3636819576 t^5 + 109344444 t^6 - \nonumber \\
& 23639112 t^7 - 1673505 t^8 + 24844 t^9 + 5394 t^{10} + 156 t^{11} + t^{12}\big). \nonumber
\end{align}  
This is obviously a quadratic expression of $\alpha^2$ and thus $\alpha$ can be expressed in terms of the parameter $t$ by radicals, i.e., $\alpha=\alpha(t)$. 
Similarly,
it turns out that in this way we also obtain a nonsimple radical parametrization for $\beta$ and $s$ in the form of $\beta(t)$ and $s(t)$, see (\ref{pms}) below:
\begin{align}\label{pms}
&\alpha(t)=\sqrt{\frac{pa_1+pa_2\omega}{q_5}},\, \beta(t)=\sqrt{\frac{pb_1+pb_2\omega}{q_5}},\, s(t)=\frac{1}{7}\sqrt{\frac{ps_1+ps_2\omega}{q_5}},\\
& \hbox{where } pa_{i}=pa_{i}(t), pb_{i}=pb_{i}(t), ps_{i}=ps_{i}(t); \quad i=1,2\nonumber
\end{align}
and
\begin{align*}
pa_1=&-78915159455 - 11841667910 t + 6656486445 t^2 + 1445318328 t^3 - \\
& 98097774 t^4 - 45176292 t^5 - 1757742 t^6 + 383160 t^7 + 20493 t^8 - \\
& 1094 t^9 + t^{10},\\
pa_2=&-32 (-237047 - 688048 t - 631196 t^2 - 136576 t^3 + 52510 t^4 +\\ 
&   7760 t^5 - 1292 t^6 - 160 t^7 + t^8),\\
pb_1=&-83889211967 - 23402064230 t - 1593364563 t^2 - 238787016 t^3 - \\
& 84657966 t^4 - 8058276 t^5 + 964434 t^6 + 8760 t^7 + 5229 t^8 + \\
& 154 t^9 + t^{10}, \\
pb_2=&\,\, 55296 (2153 + 4729 t + 3002 t^2 + 434 t^3 + 13 t^4 + 5 t^5),\\
ps_1=&-430297163879 - 140961934838 t - 23363774235 t^2 -\\ 
& 3595306632 t^3 - 594107646 t^4 - 113195268 t^5 - 1046526 t^6 +\\ 
& 1347960 t^7 + 171621 t^8 - 10742 t^9 + 25 t^{10},\\
ps_2=&-96 (-94197721 - 28304354 t - 6941146 t^2 - 1190186 t^3 - 
   19228 t^4 +\\
&    154 t^5 - 310 t^6 - 398 t^7 + 5 t^8).\\
\end{align*}
By using the polynomials from the Groebner basis, formulae can be derived for $a_{1,7}(t)$ (similar to $a_{3,7}(t)$ and $a_{5,7}(t)$ in (\ref{simpleradicalpm35})) and for the even-indexed coefficents $a_{0,7}(t)$, $a_{2,7}(t)$, $a_{4,7}(t)$ and for $L_7(t)$ (similar to $s(t)$ in (\ref{pms})), so that we can determine, for $n=7$, 
the monic polynomial in (\ref{mzolpargen}), with $a_{6,7}(t)=-7s(t)$ and $a_{7,7}(t)=1$.
But we omit these coefficient-formulae in order to save space, since 
they can be recovered from Theorem \ref{mainthm}. 
In fact, since $-L_7(t)=\widetilde{Z}_{7,t}(1)=1+\sum_{k=0}^6 a_k(t)$, it follows 
from $Z_{7,t}(x)=\widetilde{Z}_{7,t}/L_7(t)$ that for the coefficients of $Z_{7,t}$ there holds $b_{k,7}(t)=a_{k,7}(t) / L_7(t)$, in particular
$b_{7,7}(t)=1/L_7(t)$.
 

In this way we have 
obtained our radical parametrization of the septic normalized proper  
Zolotarev polynomials  $Z_{7,t}$ in Theorem \ref{mainthm}.

As for the parameter interval $I_7$, we first determine the possible real parameter values $t$ for which 
\begin{align}
&a_{3,7}(t)={\rm Root[}25621 - 21445 x - 3609 x^2 + x^3,2{\rm]}\,\,\wedge \\ 
&a_{5,7}(t)={\rm Root[}2443 + 651 x - 343 x^2 + x^3,1{\rm]} \nonumber
\end{align} 
holds, where the right hand sides of the above equations are the coefficients of the monic version of the limiting polynomial in (\ref{limpoly2}). We obtain the unique solution $\theta={\rm Root[}-1847-93x+27x^2+x^3,1{\rm]}$. Second, using the expected range  $(a_{3,7}(\theta),19/16)\times (-2,a_{5,7}(\theta))$ for $(a_{3,7}(t),a_{5,7}(t))$,
we conclude that the only appropriate choice for $I_7$ is $(\theta,-13)$ as given in Theorem \ref{mainthm}.  \qed
 

 


\begin{example}\label{exarad1}
We again prescribe  $t=t_0=-21 \in I_7$, which corresponds to prescribe $s(t_0)=s_0\in\left(\tan^2(\frac{\pi}{14}),\infty\right)=(0.0520950836\dots,\infty)$,
and get for $L_7(t_0)$, $s(t_0)$, $\alpha$, $\beta$, $\gamma=\frac{\alpha+\beta}{2}-s(t_0)\,$
the concrete values
\begin{align}\label{exa1L}
&L_7(t_0)=&
\frac{6591796875}{1024\sqrt{2 (11553696783009431\!+\!4716776960201434\sqrt6)}}=\nonumber\\
&&0.0299442459 \ldots
\end{align} 
\begin{equation}\label{exa1s}
s(t_0)=\frac{1}{28}\sqrt{\frac{-24764015 + 10110318\sqrt6}{142}}=0.0996381277 \ldots
\end{equation}
\begin{align}
&\alpha(t_0)=\alpha=\frac{1}{4}\sqrt{\frac{-1816103+742750\sqrt6}{142}}=1.1970302256 \ldots \label{exa1alpha}\\
&\beta(t_0)=\beta=\frac{1}{4}\sqrt{\frac{-330503+136350\sqrt6}{142}}=1.2384903969 \ldots \label{exa1beta} \\
&\gamma(t_0)=\gamma=\frac{1}{7}\sqrt{\frac{-2665+1138\sqrt6}{2}}=1.1181221834 \ldots\,.\, \qed  \label{exa1gamma}
\end{align}

\end{example}
\begin{example}
The goal is to solve ZFP (\ref{genZFP}) for $n=7$ and for $s=s_0=2>\tan^2(\frac{\pi}{14})$,  
say. To this end, we replace in (\ref{pms}) the left-hand side $s(t)$ by $2$ and solve the
corresponding equation for the variable $t$, where the unique solution from $I_7$ reads
$t=t_0={\rm Root[}u_{12}(x),2{\rm]}=-13.0305732483\ldots\,$, with
\begin{align}
&u_{12}(x)=\\
&45678110765558881 + 6234622186059196 x - 4142454922929870 x^2 - \nonumber\\
& 804497641938708 x^3 + 33981103268895 x^4 + 15022950168312 x^5 + \nonumber\\
& 496259775228 x^6 - 87343156680 x^7 - 6119479377 x^8 - 32085428 x^9 + \nonumber \\
& 16838386 x^{10} + 1138140 x^{11} + 3249 x^{12}.\nonumber
 \end{align}
 Then we insert this $t_0$ into $\frac{Z_{7,t}(x)}{b_{7,7}(t)}$ to obtain
 \begin{align}
 &\widetilde{Z}_{7,t_0}(x)=\widetilde{Z}_{7,s_0}^{*}(x)=\\
 & 0.4369440905\ldots+(-0.1873410678\ldots)x+(-7.8705484829\ldots)x^2+ \nonumber\\
 &(1.1870230011\ldots)x^3+(20.9955470665\ldots)x^4+(-1.9996819333\ldots)x^5+ \nonumber\\
 &(-14)x^6+x^7.\nonumber
\end{align}
This means that $\min_{(a_{0,7},\cdots,a_{5,7)}\in \mathbb{R}^{6}}||P_7||_{\infty}=||\widetilde{Z}_{7,t_0}||_{\infty}=||\widetilde{Z}_{7,s_0}^{*}||_{\infty}=L_7(t_0)=L_7(s_0)$
where $P_7(x)=\sum_{k=0}^5 a_{k,7}x^k+(-14)x^6+x^7$, i.e. its first two leading coefficients are prescribed. Evaluating
$L_7(t)=\frac{1}{b_{7,7}(t)}$ at $t=t_0$ (see (\ref{mainthmformula})), gives $L_7(t_0)=0.4380573257\ldots$ \,.\qed 
\end{example}

\section{A note on the equations of Abel-Pell and Peherstorfer-Schiefermayr for $n=7$}

The Abel-Pell differential equation for $Z_{7,t}$ reads, see (\ref{AbelPellEquation}), (\ref{mainthmformula}), (\ref{pms}),
\begin{equation}\label{AbelPellEquationn7}
\frac{(1-x^2)(x-\alpha(t))(x-\beta(t))(Z_{7,t}\,'(x))^2}{49(x-\gamma(t))^2}+(Z_{7,t}(x))^2=1.
\end{equation}
Since all terms are defined for $n=7$, a simplification with \emph{Maple} or \emph{Mathematica} will 
confirm that (\ref{AbelPellEquationn7}) holds true. For the special parameter value $t=t_0=-21$
a verification of (\ref{AbelPellEquationn7}) can be carried out by using (\ref{exa1zpol})--(\ref{exa1b7}) and (\ref{exa1s})--(\ref{exa1gamma}).

The Peherstorfer-Schiefermayr system of nonlinear equations (\ref{PSE})--(\ref{PSE2}) reads, for $n=7$,
\begin{equation}\label{PSE71}
\alpha(t)+\beta(t)+2(z_1+z_2+z_3+z_4+z_5)-14s(t)=0
\end{equation}
\begin{equation}\label{PSE72}
(-1)^k+2(-z_1^k+z_2^k-z_3^k+z_4^k-z_5^k)+1+(\alpha(t))^k-(\beta(t))^k=0\,(k=1,\dots,6).
\end{equation}
The equioscillation points $z_2$ and $z_4$, where $Z_{7,t}(z_2)=Z_{7,t}(z_4)=-1$ and $Z_{7,t}'(z_2)=Z_{7,t}'(z_4)=0$ holds,
can be given in an explicit form as follows:
\begin{align}
&z_2=z_2(t)=\\
&\!-\!\sqrt{\frac{(t\!-\!11) p_9 \!-\! 16 \omega (1 + t)^2 p_6 \!+\!8  (1 + t)^2 (13 + t)\sqrt{3(11\!-\!t) (p_{12} \!-\! 2 \omega p_{10}) }}{q_5}}, \nonumber
\end{align}
\begin{align}
&z_4=z_4(t)=\\
&\sqrt{\frac{(t\!-\!11) p_9 \!-\! 16 \omega(1 + t)^2 p_6  \!-\!8  (1 + t)^2 (13 + t)\sqrt{3(11\!-\!t) (p_{12} \!-\! 2 \omega p_{10}) }}{q_5}}, \nonumber
\end{align}
where 
\begin{align*}
&p_9=p_9(t)=\\
&7458346453 + 2414057145 t + 80738436 t^2 - 35137404 t^3 +  \\
& 943590 t^4 + 1460238 t^5 + 136212 t^6 + 4020 t^7 - 339 t^8 + t^9, \\
&p_6=p_6(t)=\\
&-4757750 - 991641 t + 35097 t^2 + 1478 t^3 - 84 t^4 - 93 t^5 + t^6,\\
&p_{12}=p_{12}(t)=\\
&9627927080284 + 4974116032425 t + 1038680780799 t^2 + \\
& 126082292719 t^3 + 6771588669 t^4 + 996190362 t^5 + 407573430 t^6 + \\
& 26107902 t^7 - 4154346 t^8 - 474323 t^9 + 59547 t^{10} -  861 t^{11} + t^{12},\\
&p_{10}=p_{10}(t)=\\
&122595778519 + 52782140344 t + 8848501713 t^2 + 1458683184 t^3 + \\
& 201606006 t^4 + 6367392 t^5 - 2103342 t^6 - 42288 t^7 + 68355 t^8 - \\
& 2648 t^9 + 13 t^{10}. 
\end{align*}
The equioscillation points $z_1,z_3$ and $z_5$ where $Z_{7,t}(z_1)=Z_{7,t}(z_3)=Z_{7,t}(z_5)=1$ and $Z_{7,t}'(z_1)=Z_{7,t}'(z_3)=Z_{7,t}'(z_5)=0$ holds,
can be represented in a similar but more bulky form (as solutions of a cubic polynomial equation with \emph{casus irreducibilis}) and are omitted.

For the special parameter value $t=t_0=-21$ a verification of (\ref{PSE71}), (\ref{PSE72}) can be carried out by using $s(t_0)$, $\alpha$ and $\beta$
from (\ref{exa1s})--(\ref{exa1beta}) and the following identities:
\begin{align}
&z_1=-\sqrt{\tau_3}=-0.8914485687\ldots, \hbox{ where } \tau_3={\rm Root[}u_3(x),3{\rm]} \hbox{ and}\\
\nonumber \\
&u_3(x)=-150\sqrt{6}(568552997 + 66091829356 x + 136570772 x^2)+ \nonumber\\
&208899235967 + 24283711656924 x + 50114778876 x^2 + 45812608 x^3 \nonumber\\
\nonumber\\
&z_2=\!\!-\frac{1}{8}\sqrt{\frac{1}{71}(\!-1744831 \!\!+\!\! 712750\sqrt6\!\!+\!\!25\sqrt{2104102185\!-\! 858995940\sqrt6}}\!=\\
&\hspace{9cm}-0.5873784916 \ldots\nonumber\\
\nonumber \\
&z_3=-\sqrt{\tau_1}=-0.1483533115\ldots, \hbox{ where } \tau_1={\rm Root[}u_3(x),1{\rm]}\\
\nonumber \\
&z_4=\!\!\frac{1}{8}\sqrt{\frac{1}{71}(\!-1744831 \!+\! 712750\sqrt6\!-\!25\sqrt{2104102185\!-\! 858995940\sqrt6}}=\\
&\hspace{9.6cm} {0.3375968260 \ldots}\nonumber\\
\nonumber \\
&z_5=\sqrt{\tau_2}=0.7692901289\ldots, \hbox{ where } \tau_2={\rm Root[}u_3(x),2{\rm]}.
\end{align}
\section{Concluding remarks}

\begin{remark}
The obtained septic radical parametrization in Theorem \ref{mainthm} which is similar to, but more complicated than the one for $n=5$ in \cite{Grasegger17} 
and for $n=6$ in \cite{RackVajda19}, may possibly be further simplified. To find a re-parametrization for $n=7$ is subject to further investigations. 
\end{remark}
\begin{remark}
A systematic computation of the genuses of all possible plane projection curves $P_{n}(a_j,a_k)=0$ of the proper Zolotarev polynomials $\widetilde{Z}_{n,t}$ and $Z_{n,t}$ for $n>4$ would shed a light on the structure of the associated space curves.  
\end{remark}
\begin{remark}
The only parameter constellation $t\in I_7$ where $\widetilde{Z}_{7,t}=Z_{7,t}$ holds is
$t=t_0=-13.0058608055\ldots$  (which is the root of an integer 
polynomial of degree 56) since then $b_{7,7}(t)=1$ holds.
\end{remark}


%




\begin{thebibliography}{00}




\bibitem{Achieser98} 
{\sc N.I. Achieser}, {\it Function theory according to Chebyshev}. In: Mathematics of the 
     19th century, Vol. 3 (A.N. Kolmogorov et al. (Eds.)), Birkh\"auser, Basel, 1998, 
     pp. 1-81 (Russian 1987).

\bibitem{Achieser03} 
{\sc N.I. Achieser}, {\it Theory of Approximation}, Dover Publications, Mineola (NY), USA, 2003 (Russian 1947). DOI: 10.2307/3612294 



\bibitem{Bernstein1952}
{\sc S.N. Bernstein}, {\it Collected Works, Vol. I. Constructive Theory of Functions (1905-
     1930)} (Russian), Akad. Nauk SSSR, Moscow, Russia, 1952. 



\bibitem{Bogatyrev02}
{\sc A. B. Bogatyrev}, {\it Effective approach to least deviation problems}, Sb. Mat. {\bf 193} (2002), pp. 21-40. 
DOI: 10.1070/SM2002v193n12ABEH000698


\bibitem{Carlson83} 
{\sc B.C. Carlson, J. Todd}, {\it Zolotarev's first problem - the best approximation by polynomials 
of degree $\le n-2$ to $x^n-n\sigma x^{n-1}$} in $[-1,1]$, Aeq. Math. {\bf 26} (1983), pp. 1-33. DOI: 10.1007/bf02189661

\bibitem{Chebyshev} 
{\sc P.L. Chebyshev}, {\it Th\'eorie des m\'ecanismes connus sous le nom de  
     parall\'elogrammes}, Mem. Acad. Sci. St. Petersburg {\bf 7} (1854), pp. 539-568. 
URL {\tt http://www.math.technion.ac.il/hat/fpapers/cheb11.pdf}


\bibitem{Chen}
{\sc X. Chen, Th.W. Parks}, Analytic design of optimal FIR narrow-band filters using Zolotarev polynomials, 
 IEEE Transactions on Circuits and Systems CAS-33 (1986), pp. 1065-1071.

\bibitem{Collins96} 
{\sc G.E. Collins}, {\it Application of quantifier elimination to Solotareff's approximation problem}. 
In: Stability Theory (Hurwitz Centenary Conference, Ascona, Switzerland, 1995, R. Jeltsch et al. (Eds.)), Birkh\"auser, Basel, 
ISNM {\bf 121} (1996), pp. 181-190. DOI: 10.1007/978-3-0348-9208-7\_19

\bibitem{Erdos42}
{\sc P. Erd\H{o}s, G. Szeg\H{o}}, {\it On a problem of I. Schur}, Ann. Math. {\bf 43} (1942), pp. 451-470. DOI: 10.2307/1968803 






\bibitem{Grasegger17}
{\sc G. Grasegger, N.Th. Vo}, {\it An algebraic-geometric method for computing  
       Zolotarev polynomials}.
In: Proceedings International Symposium on Symbolic and 
       Algebraic Computation (ISSAC '17, Kaiserslautern, Germany, M. Burr (Ed.)), 
       ACM, New York, 2017, pp. 173-180. DOI: 10.1145/3087604.3087613


\bibitem{Hoeij1}
{\sc M. van Hoeij},
{\it Rational parametrizations of algebraic curves using a canonical divisor}, J. Symb. Comput. {\bf 23} (1997), pp. 209-227.

\bibitem{Hoeij2}
{\sc M. van Hoeij}, {\it An algorithm for computing the Weierstrass normal form}. In: 
Proceedings International Symposium on Symbolic and Algebraic Computation (ISSAC'95, Montreal, Canada, A. H. M Levelt (Ed.)),
ACM, New York, 1995,  pp. 90-95. DOI: 10.1145/220346.220358


\bibitem{Kaltofen} 
{\sc E. Kaltofen}, {\it Challenges of symbolic computation: My favorite open problems}, 
       J. Symb. Comput. {\bf 29} (2000), pp. 891-919. DOI: 10.1006/jsco.2000.0370

\bibitem{Lazard} 
{\sc D. Lazard}, {\it Solving Kaltofen's challenge on Zolotarev's approximation}.
In: Proceedings International Symposium on Symbolic and Algebraic Computation 
       (ISSAC '06, Genoa, Italy, B. Trager (Ed.)), ACM, New York, 2006, pp. 196-203. DOI: 10.1145/1145768.1145803

\bibitem{Lebedev94}
{\sc V.I. Lebedev}, {\it Zolotarev polynomials and extremum problems}, Russ. J. Numer. 
       Anal. Math. Model. {\bf 9} (1994), pp. 231-263. DOI: 10.1515/rnam.1994.9.3.231 


\bibitem{Malyshev02} 
{\sc V.A. Malyshev}, {\it The Abel equation}, St. Petersburg Math. J. {\bf 13} (2002), pp. 893-938 (Russian 2001).



\bibitem{maple}
{\sc Maplesoft}, {\it Maple 2019}, Maplesoft, a division of Waterloo Maple Inc., Waterloo, Ontario, Canada.

\bibitem{AAMarkov1889}
{\sc A.A. Markov}, {\it On a question of D.I. Mendeleev} (Russian), Zapiski Imper. Akad. 
       Nauk St. Petersburg {\bf 62} (1889), pp. 1-24. URL {\tt http://www.math.technion.ac.il/hat/fpapers/mar1.pdf}
   

\bibitem{AAMarkov1906}
{\sc A.A. Markov}, {\it Lectures on functions of minimal deviation from zero} (Russian), 
       1906. In: Selected Works: Continued fractions and the theory of functions deviating 
       least from zero, OGIZ, Moscow-Leningrad, 1948, pp. 244-291.


\bibitem{VAMarkov}
{\sc V.A. Markov}, {\it On functions deviating least from zero on a given interval} 
       (Russian), Izdat. Akad. Nauk St. Petersburg 1892. 
URL {\tt  http://www.math.technion.ac.il/hat/fpapers/vmar.pdf}


\bibitem{Milovanovic}
{\sc G.V. Milovanovi\'c, D.S. Mitrinovi\'c, Th.M. Rassias}, {\it Topics in Polynomials: 
       Extremal Problems, Inequalities, Zeros}, World Scientific, Singapore, 1994. DOI: 10.1142/9789814360463

\bibitem{Paszkowski62}
{\sc S. Paszkowski}, {\it The Theory of Uniform Approximation I. Non-asymptotic 
       Theoretical Problems}, Rozprawy Matematyczne XXVI, PWN, Warsaw, Poland, 1962.



\bibitem{Peherstorfer91}
{\sc F. Peherstorfer}, {\it On the connection of Posse's $L_1$- and Zolotarev's maximum-norm problem}, 
J. Approx. Theory {\bf 66} (1991), pp. 288-301. DOI: 10.1016/0021-9045(91)90032-6


\bibitem{Peherstorfer99} 
{\sc F. Peherstorfer, K. Schiefermayr}, {\it Description of extremal polynomials on 
       several intervals and their computation I, II}, Acta Math. Hungar. {\bf 83} (1999), pp. 27-58, pp. 59-83.
DOI: 10.1023/a:1006607401740 and DOI: 10.1023/a:1006659402649


\bibitem{Peherstorfer04} 
{\sc F. Peherstorfer, K. Schiefermayr}, {\it Description of inverse polynomial    
       images which consist of two Jordan arcs with the help of Jacobi's elliptic functions}, 
       Comput. Methods Funct. Theory {\bf 4} (2004), pp. 355-390. DOI: 10.1007/bf03321075


\bibitem{Peherstorfer06}
{\sc F. Peherstorfer}, {\it Asymptotic representation of Zolotarev polynomials}, J. London 
     Math. Soc. {\bf 74} (2006), pp. 143-153. DOI: 10.1112/s0024610706022885


\bibitem{Rack89} 
{\sc H.-J. Rack}, {\it On polynomials with largest coefficient sums}, J. Approx. Theory {\bf 56} 
       (1989), pp. 348-359. DOI: 10.1016/0021-9045(89)90124-x

\bibitem{Rack17a} 
{\sc H.-J. Rack}, {\it The first Zolotarev case in the Erd\H{o}s-Szeg\H{o} solution to a Markov-type 
       extremal problem of Schur}, Stud. Univ. Babes-Bolyai Math. {\bf 62} (2017), pp. 151-162. DOI: 10.24193/subbmath.2017.2.02

\bibitem{Rack17b} 
{\sc H.-J. Rack}, {\it The second Zolotarev case in the Erd\H{o}s-Szeg\H{o} solution to a Markov-
       type extremal problem of Schur}, J. Numer. Anal. Approx. Theory {\bf 46} (2017), pp. 54-77.

\bibitem{Rack18}
{\sc H.-J. Rack, R. Vajda}, {\it Explicit algebraic solution of Zolotarev's first problem for low-degree polynomials}, 
J. Numer. Anal. Approx. Theory {\bf 48} (2019) pp. 175-201.





\bibitem{RackVajda19}
{\sc H.-J. Rack, R. Vajda}, {\it An explicit univariate and radical parametrization of the sextic proper Zolotarev polynomials in power form}, Dolomites Res. Notes Approx. 
{\bf 12} (2019), pp. 43-50. DOI: 10.14658/pubj-drna-2019-1-4


\bibitem{Rivlin75}
{\sc Th.J. Rivlin}, {\it Optimally stable Lagrangian numerical differentiation}, SIAM J. Numer. Anal. {\bf 12} (1975), pp. 712-725. 
DOI: 10.1137/0712053


\bibitem{Rivlin74} 
{\sc Th.J. Rivlin}, {\it Chebyshev Polynomials}, 2nd edition, J. Wiley \& Sons, New York (NY), USA, 1990. DOI: 10.2307/2153227 





\bibitem{Shadrin04}
{\sc A. Shadrin}, {\it Twelve proofs of the Markov inequality}. 
In: Approximation Theory - A volume dedicated to B. Bojanov (D.K. Dimitrov et al. (Eds.)), M. Drinov Acad. 
       Publ. House, Sofia, 2004, pp. 233-298. 
URL {\tt http://www.damtp.cam.ac.uk/user/na/people/} {\tt Alexei/papers/markov.pdf}

\bibitem{Shadrin14}
{\sc A. Shadrin}, {\it The Landau-Kolmogorov inequality revisited}, Discrete Contin. Dyn. 
       Syst. {\bf 34} (2014), pp. 1183-1210. DOI: 10.3934/dcds.2014.34.1183




\bibitem{Tikhomirov}
{\sc V.M. Tikhomirov}, {\it II. Approximation Theory}. In: Analysis II (R.V. Gramkelidze (Ed.)), Encyclopaedia 
of Mathematical Sciences, Vol. 14, Springer, New York (NY), USA, 1990, pp. 93-243 (Russian 1987). 
DOI: 10.1007/978-3-642-61267-1\_2 

\bibitem{Todd84}
{\sc J. Todd}, {\it Applications of transformation theory: A legacy from Zolotarev 
       (1847-1878)}, In: Approximation Theory and Spline Functions, Proceedings NATO 
       Advanced Study Institute (ASIC 136, St. John's, Canada, 1983, S.P. Singh et al. 
       (Eds.)), 1984, pp. 207-245. DOI: 10.1007/978-94-009-6466-2\_11 


\bibitem{Voronovskaja70} 
{\sc E.V. Voronovskaja}, {\it The Functional Method and its Applications}, Translations 
       of Mathematical Monographs, Vol. 28, AMS, Providence (RI), USA, 1970 (Russian 
       1963). DOI: 10.1090/mmono/028 


\bibitem{MMA1}
{\sc Wolfram Research, Inc.}, {\it Mathematica}, Version 12.0, Champaign (IL), USA, 2019.

\bibitem{Zolotarev1868}{\sc E.I. Zolotarev}, {\it On a problem of least values, Dissertation pro venia legendi}
  (Russian), lithographed paper, St. Petersburg (1868), Collected Works, Vol. 2, 
       Leningrad 1932, pp. 130-166.

\bibitem{Zolotarev1877} {\sc E.I. Zolotarev}, {\it Applications of elliptic functions to problems on functions 
       deviating least or most from zero} (Russian), Zapiski Imper. Akad. Nauk St. 
       Petersburg {\bf 30} (1877), Collected Works, Vol. 2, Leningrad, 1932, pp. 1-59. 
URL {\tt  http://www.math.technion.ac.il/hat/fpapers/zolo1.pdf}





\end{thebibliography}


\end{document}